\documentclass[preprint,12pt]{elsarticle} 
\usepackage{graphicx}
\usepackage{amsmath}
\usepackage{amsthm}
\usepackage{geometry}
\usepackage{array}
\usepackage{amsfonts}

\newtheorem{thm}{Theorem}

\newtheorem{lem}{Lemma}

\newtheorem{ex}{Example}


\journal{}

\begin{document}

\begin{frontmatter} 

\title{An Integer Linear Programming Formulation for the Convex Dominating Set Problems}

\author[mi]{Jozef J. Kratica}
\ead{jkratica@mi.sanu.ac.rs}

\author[matf]{Vladimir Filipovi\'c}
\ead{vladofilipovic@hotmail.com}

\author[banj]{Dragan Mati\'c}
\ead{dragan.matic@pmf.unibl.org}

\author[matf]{Aleksandar Kartelj}
\ead{aleksandar.kartelj@gmail.com}

\address[mi]{Mathematical Institute,
Serbian Academy of Sciences and Arts, 
Kneza Mihaila 36/III, 11000 Belgrade, Serbia}

\address[matf]{Faculty of Mathematics, University of Belgrade, Studentski
trg 16/IV, 11000 Belgrade, Serbia}

\address[banj]{Faculty of Mathematics and Natural Sciences, University of Banjaluka, 
Mladena Stojanovi\'ca 2, Banjaluka, Bosnia and Herzegovina}


\begin{abstract}
Due to their importance in practice, dominating set problems in graphs have been greatly studied
in past and different formulations of these problems are presented in literature. 
This paper's focus is on two problems: weakly convex dominating set problem (WCVXDSP) and convex dominating set problem (CVXDSP). 
It introduces two integer linear programming (ILP) formulation for CVXDSP and one ILP mode for WCVXDSP, 
as well as proof for equivalency between ILP models for CVXDSP. 
The proof of correctness for all introduced ILP formulations is provided by showing that optimal solution to 
the each ILP formulation is equal to the optimal solution of the original problem.
\end{abstract}

\begin{keyword}
convex dominating set problem \sep weakly convex dominating set problem \sep integer linear programming \sep combinatorial optimization.
\end{keyword}
 
\end{frontmatter} 

\section{Introduction}
Dominating set problems have been used in wireless networks to address issues such as: media access, routing, power management, and topology control. Another real-life application of these problems is using them to gain insight into social networks dynamics.
Different variants of dominating set problems are considered in literature. Paper \cite{cockayne1978domination} provides detailed introduction into domination problems on undirected graphs, while \cite{yu2013connected} gives the review of the applications of connected dominating sets in wireless network topology design. 
In \cite{chellali2012k}, the authors survey the results on the concept of $k$-domination which can be viewed as a generalization of the domination in graph. Another comprehensive study on domination related problems is made in \cite{goddard2013independent} where the authors present selected results on independent domination in graphs.

In this paper, the weakly convex dominating sets and convex dominating sets are the focus, therefore, in the following text, all relevant concepts and notation are provided.

Let $G = (V,E)$ be a connected undirected graph without loops and parallel edges. With $\mathcal{N}(v)$ the set of all vertices adjacent to $v$ is denoted. 
A vertex $u\in V$ is \textit{dominated} by a set $D\subseteq V$ if either $u$ itself or one of its neighbors is in $D$, i.e.
$u\in D \vee (\exists t \in N(u)) t \in D$.
A set $V'\subseteq V$ is a \textit{dominating set} in $G$ if every vertex in $V$ is dominated by $V'$. In other words, set $V'\subset V$ is a dominating set in $G$ if 
for each vertex $u\in V\setminus V'$ exists a vertex $v\in V'$ that is adjacent to $u$.  

The \textit{domination number} of graph $G$, denoted as $\gamma(G)$ is defined as the set of minimum cardinality among all dominating sets of $G$. 
  
\begin{ex} \label{ex1} 
  Given the graph $G(V,E)$ (Figure \ref{fig_ncvxdsp_example}), with vertices $V=\{1, 2, 3, 4, 5, 6, 7\}$ and edges $E=\{(1,2), (1,4), (1,5), (2,3), (3,4), (3,7)\}$, 
	it can be observed that the set $D=\{1,3\}$ is the smallest dominating set in $G$, therefore, $\gamma(G)=2$.
\end{ex}

\begin{figure}[h]
\centering
\includegraphics[width=6cm,height=4.2cm]{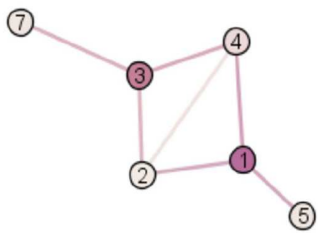} 
\caption{Dominating set}
\label{fig_ncvxdsp_example}
\end{figure}

Terms \emph{dominating set} and \emph{domination number} of a graph $G$ were introduced by O. Ore in 1962 \cite{ore62}. 
Various types of domination and graph domination numbers have been analyzed across multiple classes of graphs. This paper deals with convex domination and weakly convex domination which can be introduced  only on graphs where distance among connected vertices is introduced. 

Let $d_G(u,v)$ denote the distance between the vertices $u$ and $v$. Note that distance refers to the length of the shortest $u-v$ path in $G$.
A set $W\subseteq V$ is a \textit{weakly convex set} if for every two vertices $u$ and $v$ from $W$, it holds that $d(u,v)$ in $\langle W\rangle$ is equal to $d(u,v)$ 
in $G$, where $\langle W\rangle$  is a subgraph of $G$ induced by $W$. 

Equivalently, a set $W\subseteq V$ is a weakly convex in $G$ if for every two vertices $u,v\in W$, there exists at least one shortest $u-v$ path (in $G$), whose vertices belong to $W$.  

A set $W$ is a \textit{weakly convex dominating set} if it is weakly convex and dominating set. The \textit{weakly convex domination number}, denoted as 
$\gamma_{wcvx}(G)$ of a graph $G$ is a weakly convex dominating set of the smallest cardinality. 
The \textit{weakly convex dominating set problem} (WCVXDSP) is an optimization problem of determining a weakly convex dominating set of the smallest cardinality. This set is also known as \textit{minimal weakly convex dominating set}. 

\begin{ex} \label{ex2} 
  Dominating set $D$ in graph $G$, defined in  Example \ref{ex1} and shown in Figure \ref{fig_ncvxdsp_example}, is not a weakly convex, because the shortest path between vertices $1$ and $3$ does not pass through $D$.
Moreover, there are no other subsets of cardinality 1 or 2 that are weakly convex dominating sets. 
Therefore, for that graph, $\gamma_{wcvx}(G)>2$. Since $Y=\{1,2,3\}$ and $Z=\{1,3,4\}$ are weakly convex dominating in $G$, then $\gamma_{wcvx}(G)=3$.
\end{ex}

\begin{ex} \label{ex3} 
 In the graph $G(V,E)$, with vertices $V=\{1, 2, 3, 4, 5, 6, 7, 8\}$ and with edges $E=\{(1,2), (1,4), (1,5), (2,3), (3,4), (3,7), (4,8)\}$, 
as shown in Figure \ref{fig_wcvxdsp_example}, it can be concluded that set $D=\{1, 3, 4\}$ is the dominating set in graph $G$. It can be also seen that there is no dominating set in $G$ of cardinality $2$. Therefore, $\gamma(G)=3$.

For every pair $u, v$ of vertices in $D$, exists the shortest $u-v$ path (in $G$), whose vertices belong to $D$. Therefore, $D$ is weakly convex dominating
set in $G$ and $\gamma_{wcvx}(G)=3$.
\end{ex}

\begin{figure}[h]
\centering
\includegraphics[width=6cm,height=4.2cm]{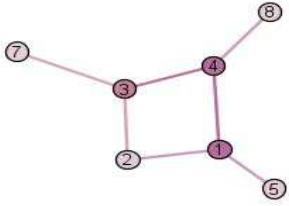} 
\caption{Weakly convex dominating set}
\label{fig_wcvxdsp_example}
\end{figure}

A set $X\subseteq G$ is a \textit{convex set} if for every two vertices $u$ and $v$ from $X$, every shortest $u-v$ path (in $G$) also belongs to $X$.
A set $X$ is a \textit{convex dominating set} if it is both the convex and the dominating set. 

The \textit{convex domination number}, denoted as $\gamma_{cvx}(G)$ of a graph $G$
is a convex dominating set of the smallest cardinality in $G$. Therefore, \textit{convex dominating set problem} (CVXDSP) is an optimization problem of determining a convex 
dominating set of the smallest cardinality. This set is also known as \textit{minimal convex dominating set}. 

\begin{ex} \label{ex4} 

Weakly convex dominating set $W=\{1, 3, 4\}$ in graph $G$, defined in Example \ref{ex3} and shown in Figure \ref{fig_wcvxdsp_example}, is not convex, 
because there is a shortest path between vertices $1$ and $3$ (path $1-2-3$), which contains vertex $2$ that does not belong to $W$. 
Moreover, since $\gamma(W)=3$ and there are no other convex dominating sets of cardinality 3, $\gamma_{cvx}(G)>3$. Since $\{1, 2, 3, 4\}$ is a convex dominating set in $G$, then $\gamma_{cvx}(G)=4$.      
\end{ex}

\begin{ex} \label{ex5} 
 In the graph $G(V,E)$, where vertices are $V=\{1, 2, 3, 4, 5, 6, 7, 8\}$ and edges are $E=\{(1,2), (1,3), (1,4), (1,5), (2,3), (3,4), (3,7), (4,8)\}$ 
(given in Figure \ref{fig_cvxdsp_example}), it can be concluded that set $D=\{1, 3, 4\}$ is the dominating set in graph $G$. There is no dominating set in $G$ that consists of two elements and $D$, so $\gamma(G)=3$.

For every pair $u, v$ of vertices in $D$, exists the shortest  $u-v$ path (in $G$), whose vertices belong to $D$. Therefore, $D$ is weakly convex dominating set in $G$ and $\gamma_{wcvx}(G)=3$.

Furthermore, for every pair $u, v$ of vertices in $V$, all the shortest $u-v$ paths (in $G$) are going through vertices in $D$, so $D$ is convex dominating set in $G$ and $\gamma_{cvx}(G)=3$.
\end{ex}

\begin{figure}[h]
\centering
\includegraphics[width=6cm,height=4.2cm]{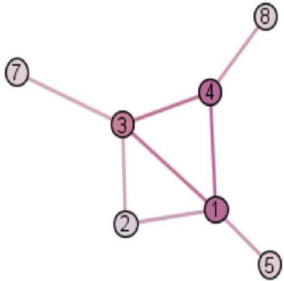} 
\caption{Convex dominating set}
\label{fig_cvxdsp_example}
\end{figure}

\subsection{Related work}
Jerzy Topp, from Gdansk University of Technology coined the term \emph{convex domination number} in 2002. 
The decision variants of WCVXDSP and CVXDSP are NP-complete even for bipartite and split graphs \cite{rac04}. Therefore, problems of determining both the weakly convex dominating and convex dominating sets of minimal cardinality are NP-hard.

In paper \cite{lem04}, the author studied relations between $\gamma_{wcvx}$ and $\gamma_{cvx}$ for some classes of cubic graphs. In these graphs, convex domination number is equal to the domination number. 

Based on the fact that every convex dominating set is a weakly convex dominating set and every weakly convex dominating set is a dominating set, the following lemma is proposed in \cite{lem04}:
\begin{lem}\label{lemma:lom04}\cite{lem04} For any connected graph $G$
$$\gamma(G)\leq \gamma_{wcvx}(G)\leq \gamma_{cvx}(G)$$
\end{lem}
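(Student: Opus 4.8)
The plan is to establish the two inequalities separately, in each case exploiting the fact that the defining property of the more restrictive solution class is a strengthening of the defining property of the less restrictive one. This makes every feasible set for the harder problem automatically feasible for the easier one, and the chain of inequalities then follows from the elementary principle that minimizing cardinality over a subfamily of feasible sets yields a value at least as large as minimizing over the whole family.

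For the right-hand inequality $\gamma_{wcvx}(G)\leq\gamma_{cvx}(G)$, I would start from a minimum convex dominating set $X$, so that $|X|=\gamma_{cvx}(G)$, and show that $X$ is automatically a weakly convex dominating set. Since $X$ is dominating by assumption, it only remains to verify weak convexity. Taking any two vertices $u,v\in X$, connectedness of $G$ guarantees that at least one shortest $u-v$ path exists; convexity of $X$ then forces \emph{every} such path to lie entirely within $X$, and in particular at least one of them does. This is exactly the condition defining weak convexity, so $X$ is a weakly convex dominating set and hence $\gamma_{wcvx}(G)\leq|X|=\gamma_{cvx}(G)$.

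For the left-hand inequality $\gamma(G)\leq\gamma_{wcvx}(G)$, the argument is even more direct: if $W$ is a minimum weakly convex dominating set with $|W|=\gamma_{wcvx}(G)$, then by definition $W$ is in particular a dominating set, so $\gamma(G)\leq|W|=\gamma_{wcvx}(G)$. Combining the two inequalities yields $\gamma(G)\leq\gamma_{wcvx}(G)\leq\gamma_{cvx}(G)$, as claimed.

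I do not anticipate any serious obstacle here. The only point requiring a moment's care is the implication \emph{convex} $\Rightarrow$ \emph{weakly convex}, where one must invoke the connectedness of $G$ to ensure that a shortest path between the chosen vertices actually exists, so that the universally quantified convexity condition genuinely entails the existentially quantified weak-convexity condition. Everything else is a routine application of the principle that a minimum taken over a subset is at least the minimum taken over the whole.
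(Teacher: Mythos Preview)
Your proposal is correct and follows exactly the approach the paper indicates: the paper does not give a formal proof of this lemma (it is cited from \cite{lem04}), but merely remarks that it is ``based on the fact that every convex dominating set is a weakly convex dominating set and every weakly convex dominating set is a dominating set,'' which is precisely the containment-of-feasible-sets argument you spell out in detail. Your observation about needing connectedness of $G$ so that a shortest path actually exists (making the universal condition imply the existential one) is a nice point of rigor that the paper's one-line justification glosses over.
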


The authors of paper \cite{jan10} presented multiple bounds for the weakly convex domination number and the convex domination number.

Closed formulas for weakly convex and convex domination numbers of a torus are proposed in \cite{rac10}.

The edge subdivision influence on the convex domination number is discussed in \cite{det12}. 
In that paper it is shown that, in general, the convex domination number can be arbitrarily increased and decreased by an edge subdivision.
Study of weakly convex domination subdivision number and its upper bounds is presented in \cite{dettlaff2016weakly}. 

Nordhaus--Gaddum type results for the weakly convex domination number and convex domination number are covered in \cite{lem10} and \cite{lem12}, respectively.

\section{Integer linear programming formulation}

Let $G$ = ($V$,$E$) be a simple connected undirected graph, with
$V=\{1,2,\dots,n\}$ and $|E|=m$. The length
$d(u,v)$ of a shortest $u-v$ path for all $u,v \in V$ can be calculated using any
shortest path algorithm. 

Decision variable $x_i$ indicates whether vertex $i$ belongs to a convex
dominating set $X$.

\begin{equation}\label{defx}
x_i  =\begin{cases}
1, & i \in X\\
0, & i \notin X
\end{cases}
\end{equation}

The integer linear programming model of the convex dominating set problem can now be
formulated as:

\begin{equation}\label{obj}
\min \sum\limits_{i = 1}^n {x_i }
\end{equation}

subject to:

\begin{equation}\label{c1}
x_i + \sum\limits_{j \in \mathcal{N}(i)} {x_j}  \ge 1\quad \quad
\quad \quad \quad \quad \quad 1 \le i \le n
\end{equation}

\begin{equation}\label{c2}
\begin{split}
x_k - x_i - x_j  \ge -1  \quad \quad &   1 \le i <j \le n, \\
                         &  k \in V, \\
                         &  d(i,k)+d(k,j)=d(i,j)
\end{split}
\end{equation}

\begin{equation}\label{c3}
x_i  \in \{ 0,1\} \quad \quad \quad \quad \quad 1 \le i \le n
\end{equation}

Constraints (\ref{obj}) represent the objective function, while constraints (\ref{c1}) ensure domination. 
Convexity is enforced by constraints (\ref{c2}) and the binary nature of the decision variables is provided by (\ref{c3}). 
It should be noted that the ILP model (\ref{obj})-(\ref{c3}) has $n$ binary variables and $O(n \cdot m)$ constraints.

Constraints (\ref{c2}) can be replaced with new ones (\ref{c2a}). 
\begin{equation}\label{c2a}
\begin{split}
x_k - x_i - x_j  \ge -1  \quad \quad &   1 \le i <j \le n, \\
                         &  k \in \mathcal{N}(i) \cup \mathcal{N}(j), \\
                         &  d(i,k)+d(k,j)=d(i,j)
\end{split}
\end{equation}

\begin{lem}\label{l1} Conditions (\ref{c3}) and (\ref{c2a}) imply that (\ref{c2}) holds. \end{lem}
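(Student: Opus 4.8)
The plan is to exploit the binary constraints (\ref{c3}) to read each inequality $x_k - x_i - x_j \ge -1$ set-theoretically. Writing $X = \{v : x_v = 1\}$, such an inequality is violated only when $x_i = x_j = 1$ and $x_k = 0$; hence it is equivalent to the implication ``if $i,j \in X$ then $k \in X$.'' Thus (\ref{c2a}) states that whenever $i,j \in X$ and $k \in \mathcal{N}(i) \cup \mathcal{N}(j)$ lies on a shortest $i$-$j$ path (i.e. $d(i,k)+d(k,j)=d(i,j)$), then $k \in X$; and (\ref{c2}) is the same statement with the restriction $k \in \mathcal{N}(i) \cup \mathcal{N}(j)$ dropped. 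It therefore suffices to show: if $i,j \in X$ and $k$ is any vertex with $d(i,k)+d(k,j)=d(i,j)$, then $k \in X$. When $x_i = 0$ or $x_j = 0$ the inequality (\ref{c2}) holds trivially, so only this case needs attention.

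I would prove this by induction on $\ell = d(i,j)$. Two cases are immediate for every $\ell$: if $k \in \{i,j\}$ then $k \in X$ by hypothesis, and if $k \in \mathcal{N}(i)\cup\mathcal{N}(j)$ then $k \in X$ directly by (\ref{c2a}). Since any $k$ with $d(i,k)+d(k,j) = \ell \le 2$ falls into one of these cases, this settles the base. For the inductive step I assume the claim for all pairs at distance strictly less than $\ell$ and take $i,j \in X$ with $d(i,j) = \ell \ge 3$ and a vertex $k$ satisfying $d(i,k)+d(k,j)=\ell$ that is neither an endpoint nor a neighbor of $i$ or $j$; in particular $d(i,k) \ge 2$.

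The key step is to pick a neighbor $u \in \mathcal{N}(i)$ lying on a shortest $i$-$k$ path, so that $d(i,u)=1$ and $d(u,k)=d(i,k)-1$. A short distance computation then shows $u$ lies on a shortest $i$-$j$ path: by the triangle inequality $d(u,j) \le d(u,k)+d(k,j) = \ell - 1$ and $d(u,j) \ge d(i,j)-d(i,u) = \ell - 1$, hence $d(i,u)+d(u,j) = \ell = d(i,j)$. Since $u \in \mathcal{N}(i)$, applying (\ref{c2a}) to the pair $\{i,j\}$ gives $u \in X$. Now $u,j \in X$ with $d(u,j) = \ell - 1 < \ell$, and the same identities give $d(u,k)+d(k,j) = (d(i,k)-1)+d(k,j) = \ell - 1 = d(u,j)$, so $k$ lies on a shortest $u$-$j$ path. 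The induction hypothesis applied to $\{u,j\}$ then yields $k \in X$, completing the step.

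The only real obstacle is the bookkeeping with distances, namely verifying that the chosen neighbor $u$ simultaneously lies on a shortest $i$-$j$ path (so that (\ref{c2a}) fires) and keeps $k$ on a shortest $u$-$j$ path (so that the distance strictly decreases and induction applies). I would also remark that although (\ref{c2a}) and (\ref{c2}) are written with the convention $i < j$, the inequalities are symmetric in $i$ and $j$, so applying them to the pair $\{u,j\}$ regardless of the numerical order of the indices causes no difficulty.
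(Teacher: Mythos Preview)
Your proof is correct. The core idea matches the paper's: propagate along a shortest $i$--$k$--$j$ path, at each step invoking (\ref{c2a}) with $j$ as the fixed far endpoint and the current vertex's neighbour as the middle vertex. The packaging differs: you first translate the binary inequalities into the set-theoretic statement ``$i,j\in X \Rightarrow k\in X$'' and then induct on $d(i,j)$, peeling off one neighbour $u\in\mathcal{N}(i)$ per step; the paper instead keeps the inequalities as written, fixes a shortest path $i=p_0,\dots,p_q=k,\dots,p_r=j$, applies (\ref{c2a}) to each triple $(p_l,j,p_{l+1})$ to obtain $x_{p_{l+1}}-x_{p_l}\ge x_j-1$, and telescopes the sum to get $x_k-x_i\ge q(x_j-1)$, which when $x_j=1$ yields $x_k-x_i-x_j\ge -1$ directly. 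The telescoping version is slightly slicker in that it does not need to assume $x_i=1$ separately, but your inductive argument is equally valid and arguably makes the geometric content clearer.
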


\begin{proof} 
Let $i,j,k \in V$ and $d(i,k)+d(k,j)=d(i,j)$. Without loss of generality, it can be assumed that $i<j$.

There are two cases:

\em{x:} $x_j = 0$. From (\ref{c3}) holds $x_i, x_k \in \{0,1\}$, so $x_k \geq 0$ and $-x_i \geq -1$. Since $x_j = 0$
is obviously that $x_k - x_i - x_j \geq -1$.

\em{Case 2:} $x_j = 1$. Since $G$ is connected and $d(i,k)+d(k,j)=d(i,j)$  then vertex $k$ belongs to a shortest path from vertex
$i$ to vertex $j$. 
Let us denote the shortest path with $p_0=i, p_1, \ldots, p_q=k, \ldots, p_r=j$, where 
$ r=d(i,j) \wedge (\forall l \in \{0,\ldots,r-1\}) p_{l+1} \in \mathcal{N}(p_l)$. 
Due to the fact that $p_{l+1} \in \mathcal{N}(p_l)$ and (\ref{c2a}), it holds $x_{p_{l+1}} - x_{p_l} - x_j \geq -1$ for $l=0,\ldots, q-1$, 
which is equivalent to $x_{p_{l+1}} - x_{p_l}  \geq x_j-1$. Summing those equations, we obtain $\sum\limits_{l=0}^{q-1} (x_{p_{l+1}} - x_{p_l}) \geq \sum\limits_{l=0}^{q-1}(x_j-1)$, 
implying $x_{p_{q}} - x_{p_{0}} \geq q \cdot (x_j-1)$, so $x_k - x_i \geq q \cdot (x_j-1)$.
Since $x_j = 1$, then $q \cdot (x_j-1) = 0 = x_j-1 $. Therefore, from two previous sentences it holds $x_k - x_i \geq x_j-1$.

In both cases, statement (\ref{c2}) is proven.
\end{proof}

\begin{thm}\label{tr} Optimal solution value of model (\ref{obj})-(\ref{c3}) is equal to optimal solution value of model (\ref{obj}), (\ref{c1}), 
(\ref{c2a}), (\ref{c3}). \end{thm}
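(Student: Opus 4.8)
The plan is to show that the two models actually have the \emph{same} feasible region. Since both models share the identical objective function (\ref{obj}), once I establish that the feasible sets coincide, equality of the optimal values follows at once: minimizing the same linear function over the same set must give the same optimum. So the entire argument reduces to a set equality $F_A = F_B$, where $F_A$ denotes the set of points satisfying (\ref{c1}), (\ref{c2}), (\ref{c3}) and $F_B$ the set of points satisfying (\ref{c1}), (\ref{c2a}), (\ref{c3}). I would establish the two inclusions separately.

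For the inclusion $F_A \subseteq F_B$, I would simply observe that every constraint appearing in (\ref{c2a}) also appears in (\ref{c2}): the index set of (\ref{c2a}) restricts $k$ to $\mathcal{N}(i)\cup\mathcal{N}(j)$, which is a subset of $V$, while the inequality $x_k - x_i - x_j \ge -1$ and the distance condition $d(i,k)+d(k,j)=d(i,j)$ are literally the same. Hence any point that satisfies the full family (\ref{c2}) a fortiori satisfies the sub-family (\ref{c2a}); together with (\ref{c1}) and (\ref{c3}), which are common to both models, this places every element of $F_A$ in $F_B$.

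For the reverse inclusion $F_B \subseteq F_A$, I would invoke Lemma \ref{l1} directly. Take any $x \in F_B$: it satisfies (\ref{c3}) and (\ref{c2a}), so by Lemma \ref{l1} it also satisfies (\ref{c2}). Since it additionally satisfies (\ref{c1}) and (\ref{c3}), it lies in $F_A$. Combining the two inclusions gives $F_A = F_B$, and the theorem follows.

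The main conceptual work has in fact already been carried out in Lemma \ref{l1}, which shows how a convexity requirement along an arbitrary shortest path can be reconstructed from the edge-local constraints (\ref{c2a}) by telescoping along the path. Consequently I expect no genuine obstacle in the present proof; the only point requiring care is to note explicitly that (\ref{c1}) and (\ref{c3}) are unchanged between the two models, so that the coincidence of the two convexity families (\ref{c2}) and (\ref{c2a}) as cutting constraints really does translate into coincidence of the full feasible regions, and hence of the optimal objective values.
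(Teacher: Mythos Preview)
Your proposal is correct and follows essentially the same approach as the paper: you show $F_A\subseteq F_B$ because (\ref{c2a}) is a sub-family of (\ref{c2}), and $F_B\subseteq F_A$ by invoking Lemma~\ref{l1}, then conclude equality of optima from equality of feasible sets. The paper's proof differs only cosmetically, phrasing the first inclusion as an inequality of optimal objective values rather than as a set containment, but the logical content is identical.
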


\begin{proof}  

It is obvious that constraints (\ref{c2a}) are a subset of constraints (\ref{c2}), so the feasible solution space of 
 (\ref{c1}), (\ref{c2a}), (\ref{c3}) contains the feasible solution space of (\ref{c1})-(\ref{c3}). 
Since the objective function (\ref{obj}) is the same for both models, 
it can be concluded that the value of objective function of model (\ref{obj}), (\ref{c1}), 
(\ref{c2a}), (\ref{c3}) is less or equal to the value of objective function of model  (\ref{obj})-(\ref{c3}). 

From Lemma \ref{l1}, the inverse statement holds: the feasible solution space of 
 (\ref{c1}), (\ref{c2a}), (\ref{c3}) is the subset of the feasible solution space of (\ref{c1})-(\ref{c3}). 

Therefore, the optimal solution value of model (\ref{obj})-(\ref{c3}) is equal to the optimal solution value of model 
(\ref{obj}), (\ref{c1}),  (\ref{c2a}), (\ref{c3}).
\end{proof}

The following theorem shows that optimal solution of (\ref{obj})-(\ref{c3}) defines a minimal convex dominating set $X$ of $G$, and vice versa.

\begin{thm}\label{tc}Set $X$ is a minimal convex dominating set of $G$ if and only if constraints (\ref{obj})-(\ref{c3})
are satisfied. \end{thm}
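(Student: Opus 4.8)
The plan is to prove the equivalence in two layers: first I would show that a binary vector $x$ satisfies the constraints (\ref{c1})--(\ref{c3}) if and only if the set $X = \{i \in V : x_i = 1\}$ is a convex dominating set of $G$, and then I would use the objective (\ref{obj}) to pass from arbitrary convex dominating sets to minimal (i.e.\ minimum-cardinality) ones. Throughout, constraint (\ref{c3}) makes the map $x \mapsto X = \{i : x_i = 1\}$ a bijection between feasible binary vectors and subsets of $V$, and under this correspondence $\sum_{i=1}^{n} x_i = |X|$.

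For domination, I would argue that for each fixed $i$, since all variables lie in $\{0,1\}$, the inequality $x_i + \sum_{j \in \mathcal{N}(i)} x_j \ge 1$ holds exactly when $i \in X$ or some neighbour of $i$ is in $X$, that is, when $i$ is dominated by $X$. Quantifying over all $i$, constraint (\ref{c1}) holds if and only if $X$ is a dominating set. This part is routine.

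The crux is the convexity step. The key observation is that a vertex $k$ lies on some shortest $i$-$j$ path precisely when $d(i,k)+d(k,j)=d(i,j)$, so the index range of (\ref{c2}) is exactly the set of triples $(i,j,k)$ with $i<j$ and $k$ on a shortest $i$-$j$ path; taking $i<j$ loses no generality since convexity is symmetric in the endpoints. For such a triple I would split on $x_i, x_j$: if $i,j \in X$ then $x_i = x_j = 1$ and the inequality reduces to $x_k \ge 1$, forcing $k \in X$; if at most one of $i,j$ lies in $X$ then $x_i + x_j \le 1$ and, as $x_k \ge 0$, the inequality holds automatically. Hence (\ref{c2}) holds over its whole index range if and only if for every $i,j \in X$ every vertex of every shortest $i$-$j$ path lies in $X$, which is exactly the definition of $X$ being convex. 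Combined with the domination step, $x$ is feasible for (\ref{c1})--(\ref{c3}) if and only if $X$ is a convex dominating set.

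It remains to add the objective. Since $\sum_{i=1}^{n} x_i = |X|$ and the feasible region is exactly the family of convex dominating sets, an optimal solution of (\ref{obj})--(\ref{c3}) corresponds to a convex dominating set of least cardinality, i.e.\ a minimal convex dominating set, and conversely any minimal convex dominating set yields a feasible vector attaining the minimum objective value; this gives both directions of the stated equivalence. I expect the main obstacle to be the convexity step, where one must verify simultaneously that the distance equation $d(i,k)+d(k,j)=d(i,j)$ faithfully encodes ``$k$ lies on a shortest $i$-$j$ path'' and that the single inequality both enforces the convexity implication when $i,j \in X$ and remains vacuous otherwise; the domination and optimality parts are then straightforward.
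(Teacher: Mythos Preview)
Your proposal is correct and follows essentially the same argument as the paper: both prove that constraint~(\ref{c1}) encodes domination and that constraint~(\ref{c2}) encodes convexity via the same case split on whether both endpoints $i,j$ lie in $X$ (using that $d(i,k)+d(k,j)=d(i,j)$ characterizes vertices $k$ on a shortest $i$--$j$ path), and then invoke the objective to pass to minimum cardinality. Your presentation is slightly more modular---first establishing ``feasible $\Leftrightarrow$ convex dominating'' as a single equivalence and then layering on optimality---whereas the paper treats the two implications separately, each ending with one inequality between $|X|$ and the optimal value; but the mathematical content is identical.
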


\begin{proof} ($\Rightarrow$) Let $X$ be the minimal convex dominating set of $G$ and decision variables $x$ are defined by (\ref{defx}). 
Constraints (\ref{c3}) about binary nature of $x$ variables are trivially satisfied from the definition (\ref{defx}). 

Since $X$ is the dominating set, then $(\forall i \in V)(\exists j \in \mathcal{N}(i)) (i \in X \vee j \in X)$ imply that 
$(\forall i \in V)(\exists j \in \mathcal{N}(i)) (x_i = 1 \vee x_j = 1)$, which means 
$(\forall i \in V) x_i + \sum\limits_{j \in \mathcal{N}(i)} {x_j}  \ge 1$. In that way, constrains (\ref{c1}) are satisfied.

For $i, j \in V$, without loss of generality it can be assumed $1 \le i < j \le n$. The following two cases are possible:

{\em Case 1.} $i \notin X \vee j \notin X$. In this situation, $x_i = 0 \vee x_j=0$ and $x_i, x_j \in \{0,1\}$ then $x_i + x_j \le 1 $ implying 
$-x_i - x_j \ge -1$. Since $(\forall k \in V) x_k \ge 0 $, then $x_k -x_i - x_j \ge -1$. 

{\em Case 2.} $i \in X \wedge j \in X$. Let $d(i,k)+d(k,j)=d(i,j)$, which means that $k$ belongs to an $i-j$ shortest path. Since  
$X$ is a convex set, every vertex that belongs to that $i-j$ shortest path has to belong to set $X$, so  $k \in X$. Therefore,
$x_k = x_i = x_j = 1 $ implying $x_k - x_i - x_j = -1 \ge -1 $. 

In both cases constraints (\ref{c2}) are satisfied.

From the definition (\ref{defx}) it holds $|X| = \sum\limits_{i = 1}^n {x_i } $. 
Since decision variables $x$ represent a feasible solution of ILP model 
(\ref{obj})-(\ref{c3}), its optimal solution has to be less or equal to $|X|$.

($\Leftarrow$) Let $X=\{i\,|\, x_i = 1\}$. Since variables $x_i$ are binary, from (\ref{c1}) it holds 
$(\forall i) (x_i=1 \vee \sum\limits_{j \in \mathcal{N}(i)} {x_j} \ge 1) \Rightarrow $
$(\forall i) (x_i=1 \vee (\exists j \in \mathcal{N}(i)) x_j=1) $. So, $(\forall i) (i \in X \vee (\exists j \in \mathcal{N}(i)) j \in X) $
and therefore $X$ is the dominating set of $G$.

Let $i, j \in X$, so $x_i = x_j = 1$. Further, without loss of generality, let $i<j$. Let vertex $k \in V$ which belongs to an $i-j$ shortest path, so $d(i,k)+d(k, j)=d(i,j)$. 
Applying (\ref{c2}) we have that $x_k - x_i - x_j \geq -1$, which is equivalent to $x_k \geq x_i + x_j -1$. Since $x_i = x_j = 1$, then $x_k \geq 1$. 
From binary nature of variables $x_k$, given by (\ref{c3}), $x_k = 1$ holds, meaning $k \in X$. Therefore, for each pair $i, j \in X$ and any vertex $k$ belonging to $i-j$ shortest path, $k \in X$ holds, so $X$ is a convex set.

Since $X$ is proven to be convex domination set, minimal convex domination number has to be less or equal to $\sum\limits_{i = 1}^n {x_i } $.
\end{proof}

If constraints (\ref{c2}) are replaced by 

\begin{equation}\label{c2w}
- x_i - x_j  + \sum\limits_{\begin{array}{*{20}{c}}
{k \in {\mathcal{N}(j)}}\\
{d(i,k) + d(k,j) = d(i,j)}
\end{array}} {{x_k}} \ge -1  \quad \quad  1 \le i <j \le n
\end{equation}

then (\ref{obj}), (\ref{c1}), (\ref{c2w}) and (\ref{c3}) represents the ILP formulation for weak convex 
domination problem. This fact is formulated in Theorem \ref{twc}.

\begin{thm}\label{twc} Set $W$ is a minimal weak convex dominating set of $G$ if and only if constraints (\ref{obj}), (\ref{c1}), (\ref{c3}) and (\ref{c2w})
are satisfied. \end{thm}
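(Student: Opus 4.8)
The plan is to follow the two-direction scheme already used in the proof of Theorem \ref{tc}, replacing the convexity constraint (\ref{c2}) by the weak-convexity constraint (\ref{c2w}) and the notion of convexity by weak convexity throughout. Since the objective (\ref{obj}), the domination constraints (\ref{c1}) and the binary constraints (\ref{c3}) are identical to those in Theorem \ref{tc}, the arguments establishing domination and the binary nature of the variables carry over verbatim; the substantive work lies in relating (\ref{c2w}) to weak convexity, and in assembling the two resulting inequalities on the optimal value into the claimed correspondence.

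For the forward direction ($\Rightarrow$), I would start from a minimal weakly convex dominating set $W$ and define $x$ by its indicator as in (\ref{defx}). Constraints (\ref{c3}) and (\ref{c1}) are then satisfied exactly as in Theorem \ref{tc}. To verify (\ref{c2w}) I would fix a pair $i<j$ and split into cases. If $i \notin W$ or $j \notin W$, then $-x_i-x_j \ge -1$ while the sum term is nonnegative, so the inequality holds trivially. If $i,j \in W$, weak convexity guarantees a shortest $i$--$j$ path lying entirely in $W$; taking $k$ to be the vertex immediately preceding $j$ on this path gives $k \in \mathcal{N}(j) \cap W$ with $d(i,k)+d(k,j)=d(i,j)$, so the sum in (\ref{c2w}) is at least $1$ and the constraint holds. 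As in Theorem \ref{tc}, this exhibits a feasible point with $\sum_i x_i = |W|$, so the optimal ILP value does not exceed $\gamma_{wcvx}(G)$.

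For the converse ($\Leftarrow$), I would set $W=\{i : x_i=1\}$; domination follows as before. The heart of the proof is showing $W$ is weakly convex, i.e. that every pair $i,j \in W$ is joined by a shortest path inside $W$. I would argue by induction on $d(i,j)$, taking $i<j$ without loss of generality (the weak-convexity conclusion is symmetric in $i$ and $j$). The base case $d(i,j)=1$ is the edge $i$--$j$ itself. For the inductive step, constraint (\ref{c2w}) applied to the pair $(i,j)$ forces some $k \in \mathcal{N}(j) \cap W$ with $d(i,k)+d(k,j)=d(i,j)$, hence $d(i,k)=d(i,j)-1$; the induction hypothesis supplies a shortest $i$--$k$ path inside $W$, and appending the edge $k$--$j$ yields a shortest $i$--$j$ path inside $W$. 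This establishes weak convexity, and the objective (\ref{obj}) then forces $\gamma_{wcvx}(G) \le \sum_i x_i$, matching the forward bound and completing the correspondence.

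The main obstacle I anticipate is precisely this inductive path-construction step in the converse: unlike convexity, where (\ref{c2}) directly certifies that \emph{every} vertex on a shortest path lies in $X$, weak convexity requires only one such path, so (\ref{c2w}) must be used to peel off a single neighbor of $j$ at a time and the existence of the full in-$W$ path has to be assembled recursively. Care is also needed because (\ref{c2w}) is stated only for $i<j$ and examines neighbors of the larger-indexed endpoint; I would handle this by always relabelling so that the smaller index plays the role of $i$, which is legitimate exactly because the property being proved is symmetric in its two endpoints.
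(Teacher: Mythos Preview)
Your proposal is correct and mirrors the paper's own proof almost step for step: the same case split in the forward direction (with the penultimate vertex of an in-$W$ shortest path witnessing the sum in (\ref{c2w})), and the same induction on $d(i,j)$ in the converse, peeling off one neighbor of $j$ at a time via (\ref{c2w}). Your remark about the asymmetry of (\ref{c2w}) in $i$ and $j$ is a point the paper glosses over with a ``without loss of generality'' but handles implicitly in the same way you suggest.
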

\begin{proof} ($\Rightarrow$) Let $W$ be the minimal weak convex dominating set of $G$ and decision variables $x$ are defined by: 

\begin{equation}
\nonumber
x_i  =\begin{cases}
1, & i \in W\\
0, & i \notin W.
\end{cases}
\end{equation}
Again, constraints (\ref{c3}) about binary nature of $x$ variables are trivially satisfied by their definition. 
Since $W$ is the dominating set, then, in the same way as in proof of Theorem \ref{tc}, constrains (\ref{c1}) are satisfied.
Also, from the definition of decision variables $x$ it holds $|W| = \sum\limits_{i = 1}^n {x_i } $. Similarly, as in the proof of Theorem \ref{tc}, it can be shown that the optimal solution value of ILP model 
(\ref{obj}), (\ref{c1}), (\ref{c3}) and (\ref{c2w}) is less or equal to $|W|$.

It remains to be proven that constraints (\ref{c2w}) hold.
For $i, j \in V$, without loss of generality it can be assumed $1 \le i < j \le n$. 
Similarly as in proof of Theorem \ref{tc} we have two possible cases.

{\em Case 1.} $i \notin W \vee j \notin W$.\\
Since $x_i = 0 \vee x_j=0$ and binary nature of variables $x$ we have $x_i + x_j \le 1$ implying 
$-x_i - x_j \ge -1$. The following implications hold:

\begin{equation} 
\nonumber
(\forall k \in V) x_k \ge 0 \Rightarrow \sum\limits_{\begin{array}{*{20}{c}}
{k \in {\mathcal{N}(j)}}\\
{d(i,k) + d(k,j) = d(i,j)}
\end{array}} {{x_k}} \ge 0 
\end{equation}

\begin{equation} 
\nonumber
 \Rightarrow  - x_i - x_j  + \sum\limits_{\begin{array}{*{20}{c}}
{k \in {\mathcal{N}(j)}}\\
{d(i,k) + d(k,j) = d(i,j)}
\end{array}} {{x_k}} \ge -1
\end{equation}

{\em Case 2.} $i \in W \wedge j \in W$.\\
Let $s=d(i,j)$. Because $i<j$, it holds that $s \geq 1$. Since $W$ is the weak convex dominating set,
then there exists (at least one) shortest $i-j$ path of length $s$
named $i=p_0,p_1,...,p_s=j$ whose all vertices belong to $W$.
Vertex $p_{s-1} \in W$, therefore it holds that $x_{p_{s-1}}=1$. 
Since $p_{s-1}$ is the second to last vertex in the $i-j$ shortest path, then
 $d(i,p_{s-1})=s-1 \wedge d(p_{s-1},j)=1$ implying $d(i,p_{s-1})+d(p_{s-1},j)=s=d(i,j)$.
Vertex $p_{s-1} \in {\mathcal{N}(j)}$ because $d(p_{s-1},j)=1$. Then,

\begin{equation}
\nonumber
- x_i - x_j  + \sum\limits_{\begin{array}{*{20}{c}}
{k \in {\mathcal{N}(j)}}\\
{d(i,k) + d(k,j) = d(i,j)}
\end{array}} {{x_k}} = 
\end{equation}

\begin{equation}
\nonumber
- x_i - x_j  + x_{s-1} + \sum\limits_{\begin{array}{*{20}{c}}
{k \ne p_{s-1} \wedge k \in {\mathcal{N}(j)}}\\
{d(i,k) + d(k,j) = d(i,j)}
\end{array}} {{x_k}}  \ge -1,
\end{equation}
since $x_i = x_j = x_{s-1} = 1$ and all decision variables $x$ are
binary (non-negative).

Therefore, it is proven that constraints (\ref{c2}) are satisfied for both cases. 

($\Leftarrow$) Let $W=\{i\,|\, x_i = 1\}$. In the same way as in proof of Theorem \ref{tc},
$W$ is the dominating set of $G$. We need to prove that $W$ is a weak convex set,
i.e. that for each $i,j \in W$ there exists at least one shortest $i - j$ path whose vertices belong to $W$.

This property will be proven using the mathematical induction by distance $d(i,j)$.\\
{\em Step 1. }  $d(i,j)=1$. \\
Let $i,j \in W$. Since $d(i,j)=1$ then $i-j$ shortest path has length 1, i.e. $i$ and $j$ are the only vertices in that
shortest path, so both vertices from the $i-j$ shortest path ($i$ and $j$) belong to $W$. \\
{\em Step 2. }  $d(i,j)=s$. \\
Lets assume inductive hypothesis, that for each pair of vertices in $W$ with distance $s$,
there is at least one shortest path whose all vertices also belong to $W$. 
We must prove this fact also holds for all pairs of vertices with distance $s+1$.
Let $i,j \in W$ with $d(i,j)=s+1$. Without loss of generality it can be assumed that $i<j$.
From $i,j \in W$ it is implied that $x_i = x_j=1$. Since constraints (\ref{c2w}) must be satisfied, 
it must also hold that:

\begin{equation}
\nonumber
 \sum\limits_{\begin{array}{*{20}{c}}
{k \in {\mathcal{N}(j)}}\\
{d(i,k) + d(k,j) = d(i,j)}
\end{array}} {{x_k}} \ge 1.
\end{equation}

Due to the binary nature of $x$ then $(\exists k) (x_k=1 \wedge k \in \mathcal{N}(j) \wedge d(i,k)+d(k,j)=d(i,j))$, implying $(\exists k) (k \in W \wedge d(k,j)=1 \wedge d(i,k)+d(k,j)=s+1)$. 
Consequently, $(\exists k) (k \in W \wedge d(i,k)=s)$ and vertex $k$ belongs to an $i-j$ shortest path. 

From inductive hypothesis stating that both vertices $i$ and $k$ belong to $W$ with distance $s$, we have
that there exists (at least one) shortest $i-k$ path 
named $i=p_0,p_1,...,p_s=k$ whose all vertices belong to $W$.
Since $d(i,j)= d(i,k) + d(k,j) = s+1$ we can construct the shortest
$i-j$ path $i=p_0,p_1,...,p_s=k,p_{s+1}=j$ whose all vertices belong to $W$,
so {\em Step 2. } is proven.

Since $W$ is proven to be weakly convex domination set, minimal weakly convex domination number has to be less or equal to $\sum\limits_{i = 1}^n {x_i } $.
\end{proof}

\section{Conclusions}

In this work, we studied the weakly convex dominating set problem and convex dominating set problem. 
For convex dominating set problem, two ILP formulations were introduced and their related ILP models are proven to be equal. For weakly convex dominating set problem, one ILP formulation is introduced. 
It is also confirmed that all ILP formulations are correct since their optimal solutions are equal to the optimal solutions of the starting problems.

This work can be extended in several ways. For example, the proposed models can be experimentally tested across different classes of graph instances by applying exact ILP solvers. 
It might also be interesting to design a meta-heuristic algorithm to deal with large-scale instances. 



\begin{thebibliography}{10}

\bibitem{cockayne1978domination}
E.~Cockayne, Domination of undirected graphsΓÇöa survey, in: Theory and
  Applications of Graphs, Springer, 1978, pp. 141--147.

\bibitem{yu2013connected}
J.~Yu, N.~Wang, G.~Wang, D.~Yu, Connected dominating sets in wireless ad hoc
  and sensor networks--a comprehensive survey, Computer Communications 36~(2)
  (2013) 121--134.

\bibitem{chellali2012k}
M.~Chellali, O.~Favaron, A.~Hansberg, L.~Volkmann, k-domination and
  k-independence in graphs: A survey, Graphs and Combinatorics 28~(1) (2012)
  1--55.

\bibitem{goddard2013independent}
W.~Goddard, M.~A. Henning, Independent domination in graphs: A survey and
  recent results, Discrete Mathematics 313~(7) (2013) 839--854.

\bibitem{ore62}
O.~Ore, Theory of graphs, American Mathematical Society Colloquium
  Publications, Vol. XXXVIII, American Mathematical Society, Providence, R.I.,
  1962.

\bibitem{rac04}
J.~Raczek, N{P}-completeness of weakly convex and convex dominating set
  decision problems, Opuscula Math. 24~(2) (2004) 189--196.

\bibitem{lem04}
M.~Lema{\'n}ska, Weakly convex and convex domination numbers, Opuscula Math.
  24~(2) (2004) 181--188.

\bibitem{jan10}
T.~Janakiraman, P.~Alphonse, Weak convex domination in graphs, International
  Journal of Engineering Science, Advanced Computing and Bio-Technology 1~(1)
  (2010) 1--13.

\bibitem{rac10}
J.~Raczek, M.~Lema{\'n}ska, A note on the weakly convex and convex domination
  numbers of a torus, Discrete Appl. Math. 158~(15) (2010) 1708--1713.

\bibitem{det12}
M.~Dettlaf, L.~M., Influence of edge subdivision on the convex domination
  number, Australasian Journal of Combinatorics 53 (2012) 19--30.

\bibitem{dettlaff2016weakly}
M.~Dettlaff, S.~Kosary, M.~Lema{\'n}ska, S.~M. Sheikholeslami, Weakly convex
  domination subdivision number of a graph, Filomat 30~(8) (2016) 2101--2110.

\bibitem{lem10}
M.~Lema{\'n}ska, Nordhaus-{G}addum results for weakly convex domination number
  of a graph, Discuss. Math. Graph Theory 30~(2) (2010) 257--263.

\bibitem{lem12}
M.~Lema{\'n}ska, J.~A. Rodr{\'{\i}}guez-Vel{\'a}zquez, I.~Gonzalez~Yero,
  Nordhaus-{G}addum results for the convex domination number of a graph,
  Period. Math. Hungar. 65~(1) (2012) 125--134.

\end{thebibliography}
\end{document}